\newtheorem{thm}{Theorem}[section]
\newtheorem{Assum}[thm]{Assumption}
\numberwithin{equation}{section}
\theoremstyle{definition}
\begin{document}

\title[Direct and inverse source problems of  $q$-heat  equation ]
{Direct and inverse source problems for heat  equation in quantum calculus}

\author[Michael Ruzhansky]{Michael Ruzhansky}
\address{
 Michael Ruzhansky:
  \endgraf
Department of Mathematics: Analysis, Logic and Discrete Mathematics,
  \endgraf
 Ghent University, Ghent,
 \endgraf
  Belgium 
  \endgraf
  and 
 \endgraf
 School of Mathematical Sciences, Queen Mary University of London, London,
 \endgraf
 UK  
 \endgraf
  {\it E-mail address} {\rm michael.ruzhansky@ugent.be}
  }

\author[Serikbol Shaimardan]{Serikbol Shaimardan}
\address{
  Serikbol Shaimardan:
  \endgraf
  L. N. Gumilyov Eurasian National University, Astana,
  \endgraf
  Kazakhstan 
  \endgraf
  and 
  \endgraf
Department of Mathematics: Analysis, Logic and Discrete Mathematics
  \endgraf
 Ghent University, Ghent,
 \endgraf
  Belgium
  \endgraf
  {\it E-mail address} {\rm shaimardan.serik@gmail.com} 
  }

\thanks{The authors are supported by the FWO Odysseus 1 grant G.0H94.18N: Analysis and Partial Differential Equations and by the Methusalem programme of the Ghent University Special Research Fund (BOF) (Grant number 01M01021). Michael Ruzhansky is also supported by EPSRC grant EP/R003025/2, and  the second author  by the international internship program “Bolashak” of the Republic of Kazakhstan.}

\date{}

\begin{abstract}
In this paper we explore the  weak solutions of the Cauchy problem and an inverse source problem for the heat equation in the quantum calculus, formulated in abstract Hilbert spaces.  For this we use the Fourier series expansions.  Moreover,  we prove  the existence, uniqueness and stability of the weak solution of the inverse problem with   a final determination condition. We give some examples such as  the $q$-Sturm–Liouville problem, the $q$-Bessel operator,  the $q$-deformed Hamiltonian, the fractional Sturm-Liouville operator, and  the restricted fractional Laplacian, covered by our analysis.

\end{abstract}

\subjclass[2010]{34C10, 39A10, 26D15.}

\keywords{Rubin difference operator, heat equation, wave equation, a priori estimate, $q$-derivative, $q$-calculus, well-posedness, Sobolev type space}

\maketitle

\section{Introduction}

It is well known that quantum groups provide the key to $q$-deforming the fundamental structures of physics from the point of view of the non-commutative geometry. An important concept in this theory is integration and Fourier theory on quantum spaces, see e.g. \cite{St1996}, \cite{Wa2003} and \cite{Wa2004}. Nowadays, the theory of quantum groups and $q$-deformed algebras have been the subject of intense investigation. Many physical applications have been investigated on the basis of the $q$-deformation of the Heisenberg algebra \cite{BM2000} and \cite{HW1999}. For instance, the $q$-deformed Schr$\ddot{o}$dinger equations have been proposed in  \cite{Micu1999} and \cite{Lavagno2009}  and applications to the study of the $q$-deformed version of the hydrogen atom and of the quantum harmonic oscillator \cite{EM2006} have been introduced. The fractional calculus and $q$-deformed Lie algebras are closely related.  A new class of fractional $q$-deformed Lie algebras is considered, which for the first time allows a smooth transition between different Lie algebras \cite{Herrmann2010}.

The origin of the  $q$-difference calculus can be traced back to the works   \cite{J1908, J1910} by  F.~Jackson and  R. D.~Carmichael \cite{C1912} from the beginning of the twentieth century, while basic definitions and properties can be found e.g. in the monographs \cite{CK2000, E2002}.  Recently,  the fractional $q$-difference calculus has been proposed by W. Al-salam  \cite{A1966} and R. P.~Agarwal \cite{A1969}. Today, maybe due to the explosion in research within the fractional differential calculus setting, new developments in the theory of the fractional $q$-difference calculus have been addressed extensively by several researchers. For example, some researchers obtained $q$-analogues of the integral and differential fractional operators  such as the $q$-Laplace transform and $q$-Taylor’s formula \cite{PMS2007}, $q$-Mittag-Leffler function \cite{A1966}. Moreover, in 2007, M. S.~Ben Hammouda and Akram Nemri defined the higher-order $q$-Bessel translation and the higher order $q$-Bessel Fourier transform and established some of their properties, as well as studied  the higher-order $q$-Bessel heat equation \cite{HM2007}. In 2012, A. Fitouhi and F. Bouzeffour established in great detail the $q$-Fourier analysis related to the $q$-cosine and constructed  the $q$-solution source, the $q$-heat polynomials, and solved the $q$-analytic Cauchy problem.

The paper is organized as follows: The main results are presented and proved in Sections \ref{S3} and \ref{S4}  for direct and inverse problems, respectively. In the final section the  examples are given. In order to simplify these presentations we include in Section \ref{S2} the 
 necessary preliminaries.

\section{Preliminaries}\label{S2}

In this section, we recall some  notations  related to the  $q$-calculus. We will always assume that $0<q<1$. The $q$-real number $[\alpha ]_q$ is defined by
$$
[\alpha ]_{q}:=\frac{1-q^{\alpha }}{1-q}.
$$

The $q$-analogue differential operator $D_{q}f(x)$ is defined by 
\begin{eqnarray}\label{additive2.1}
D_{q}f(x)=\frac{f(x)-f(qx)}{x(1-q)},
\end{eqnarray}

The $q$-derivative of a product of two functions has the form
\begin{eqnarray}\label{additive2.2}
D_{q}(fg)(x)= f(qx)D_{q}(g)(x)+D_{q}(f)(x)g(x).
\end{eqnarray}

The $q$-integral (or Jackson integral) is defined by (see \cite{J1910})
\begin{eqnarray}\label{additive2.4}
\int\limits_0^x f(t)d_{q}t=(1-q)x\sum\limits_{m=0}^\infty q^{m}f(xq^{m}),
\end{eqnarray}
and, more generally, 
\begin{eqnarray*} 
\int\limits_a^b f(x)d_{q}x=\int\limits_0^b f(x)d_{q}x-
\int\limits_0^a f(x)d_{q}x,
\end{eqnarray*}
provided the sums converge absolutely. Note that 
\begin{eqnarray}\label{additive2.4}
\int\limits_a^x D_qf(t)d_{q}t= f(x) -f(a). 
\end{eqnarray}

The  $q$-version of the integration by parts takes the form 
\begin{eqnarray}\label{additive2.5}
\int\limits_a^bf(x)D_qg(x)d_qx=\left[fg\right]_a^b-\int\limits_a^bg(qx)D_qf(x)d_qx.   
\end{eqnarray}

Let $H$ be a separable Hilbert space and let $\mathcal{L}$ be an operator with the  discrete spectrum and eigenfunctions  $\{\phi_k\}_{k\in{I}}$ in  $H$,   where $I$ is a countable set $(I=\mathbb{N}^k$ or $I=\mathbb{Z}^k$ for some $k$). We assume that the operator  $\mathcal{L}$ is  diagonalisable (can be written in the infinite dimensional matrix form) with respect to the  orthonormal basis $\{\phi_k\}_{k\in{I}}$ of  $H$ with the eigenvalues $\lambda_k$ such that $\lambda_k>\lambda_0>0$, where $\lambda_0$  is  some constant  independent of  $k$. In further calculus for our analysis we will also require that $\lambda_k\rightarrow\infty$. The Plancherel identity takes the form
\begin{eqnarray}\label{additive2.6}
 \|u\|_{H}=\left(\sum\limits_{k\in{I}} \left|\langle u,\phi_k\rangle_H\right|^2 \right)^\frac{1}{2},
 \end{eqnarray}
for $u\in{H}$.

We denote  the domain of the operator $\mathcal{L}^k$ for $k\in\mathbb{N}_0$ by 
\begin{eqnarray*}
Dom(\mathcal{L}^k):=\left\{u\in{H}: \mathcal{L}^iu\in Dom(\mathcal{L}), \;\;\;i=0,1,2,\cdots,k-1\right\}.
\end{eqnarray*}
The space $C^\infty_{\mathcal{L}}(H):= \bigcap\limits_{k=1}^\infty{Dom}(\mathcal{L}^k)$ is called the space of test functions for $\mathcal{L}$.  Moreover, we introduce the Fr$\acute{e}$chet topology of $C^\infty_{\mathcal{L}}(H)$ with the family of norms
\begin{eqnarray*}
\|u\|_{C^k_{\mathcal{L}}}:=\max\limits_{j\leq{k}}\|\mathcal{L}^ju\|_H,
\end{eqnarray*}
for $k\in\mathbb{N}_0$, $u\in C^\infty_{\mathcal{L}}(H)$, and the space of $\mathcal{L}$-distributions $\mathcal{D}'_{\mathcal{L}}(H):=\mathfrak{L}\left(C^\infty_{\mathcal{L}}(H),\mathbb{C}\right)$. Consequently, we can also define Sobolev spaces $\mathcal{H}^d_{\mathcal{L}}$   associated to $\mathcal{L}$ as   
\begin{eqnarray*}
\mathcal{H}^d_{\mathcal{L}}:=\left\{u\in\mathcal{D}'_{\mathcal{L}}(H): \mathcal{L}^{d/2}u\in{H}\right\}
\end{eqnarray*}
for any $d\in\mathbb{R}$. Using Plancherel’s identity (\ref{additive2.6}), we can write the norm in the following form: 
\begin{eqnarray}\label{additive2.6A}
\|u\|_{\mathcal{H}^d_{\mathcal{L}}}:=\|\mathcal{L}^{d/2}u\|_H=\left(\sum\limits_{k\in{I}}\lambda_k^d\left|\langle u,\phi_k\rangle_H\right|^2\right)^\frac{1}{2}.
\end{eqnarray}

We introduce the spaces   $C_q^k\left( [0, T]; \mathcal{H}^d_{\mathcal{L}}\right)$   and $L^\infty\left( [0, T]; \mathcal{H}^d_{\mathcal{L}}\right)$ as 
\begin{eqnarray*}
\|u\|_{C_q^k\left( [0, T];\mathcal{H}^d_{\mathcal{L}}\right)}:=
\sum\limits_{m=0}^k\max\limits_{0\leq t\leq T}\| D_q^mu(t)\|_{\mathcal{H}^d_{\mathcal{L}}},
\end{eqnarray*}
and 
\begin{eqnarray*}
\|u\|_{L^\infty\left( [0, T];\mathcal{H}^d_{\mathcal{L}}\right)}:=
\sup\limits_{0\leq t\leq T}\|  u(t)\|_{\mathcal{H}^d_{\mathcal{L}}},
\end{eqnarray*}
respectively, where  $D_q^ku=D_q\left(D_q^{k-1}u\right)$.

As given in \cite{CK2000}, two  $q$-analogues of the exponential function are defined as  
 \begin{eqnarray}\label{additive2.6A}
 e_q^x=\sum\limits_{k=0}^\infty\frac{x^k}{[k]_q}, \;\;\;\;
  E_q^x=\sum\limits_{k=0}^\infty q^{k(k-1)/2}\frac{x^k}{[k]_q}.
  \end{eqnarray}

Moreover, we can rewrite   
\begin{eqnarray*}
E_q^x&=&\lim\limits_{N\rightarrow\infty}\prod\limits_{i=0}^N\left(1+(1-q)q^ix\right),
\end{eqnarray*}   
and we have 
\begin{eqnarray*}
 e_q^xE_q^{-x}=1.
  \end{eqnarray*}

We will be making the following assumption:
\begin{Assum}\label{a2.1}  We assume that  $\upsilon$ is a  continuous function on $[0,T]$ and, there are some $ \beta, \alpha>0$ such that  $0<\alpha\leq\upsilon(t)\leq\beta$  for all  $t\in[0, T]$. 
\end{Assum}

For $0\leq{t}\leq{T}$ we denote  
\begin{eqnarray}\label{additive2.7}
\gamma_\upsilon(t):=\frac{1}{ \lim\limits_{N\rightarrow\infty}\prod\limits_{i=0}^N\left(1+(1-q) tq^i\upsilon(q^it)\right)}.
\end{eqnarray}

So, by Assumption \ref{a2.1}, we obtain
\begin{eqnarray*} 
\frac{1}{ \prod\limits_{i=0}^N\left(1+(1-q) tq^i\beta\right)} \leq\frac{1}{ \prod\limits_{i=0}^N\left(1+(1-q) tq^i\upsilon(q^it)\right)}\leq\frac{1}{ \prod\limits_{i=0}^N\left(1+(1-q) tq^i\alpha\right)}, 
\end{eqnarray*}
and  it can be written as 
\begin{eqnarray}\label{additive2.7A}
\frac{1}{E_q^{\beta{t}}}\leq\gamma_\upsilon(t)\leq\frac{1}{E_q^{\alpha{t}}}.
\end{eqnarray}

Form  (\ref{additive2.1}) and  (\ref{additive2.4}) it follows that  
\begin{eqnarray}\label{additive2.8}
D_q\left[\gamma^{-1}_\upsilon(\lambda_kt)\right]=\lambda_k\upsilon(t)\gamma^{-1}_\upsilon(q\lambda_kt)
\end{eqnarray}
and 
\begin{eqnarray}\label{additive2.9}
\int\limits_0^T \gamma^{-1}_\upsilon(\lambda_ks) d_qs&=&\frac{1}{\lambda_k}\int\limits_0^T \frac{D_q\left[\gamma^{-1}_\upsilon(\lambda_ks)\right]}{\upsilon(t)}d_qs\nonumber\\
&\geq&\frac{1}{\lambda_k\beta}\int\limits_0^T  D_q\left[\gamma^{-1}_\upsilon(\lambda_ks)\right] d_qs\nonumber\\
&=&\frac{\gamma^{-1}_\upsilon(\lambda_kT)}{\lambda_k\beta}
\end{eqnarray}
and 
\begin{eqnarray}\label{additive2.10}
\int\limits_0^t \gamma^{-1}_\upsilon(\lambda_ks) d_qs&=&\frac{1}{\lambda_k}\int\limits_0^t \frac{D_q\left[\gamma^{-1}_\upsilon(\lambda_ks)\right]}{\nu(t)}d_qs\nonumber\\
&\leq&\frac{1}{\lambda_k\alpha}\int\limits_0^T  D_q\left[\gamma^{-1}_\upsilon(\lambda_ks)\right] d_qs\nonumber\\
&=&\frac{\gamma^{-1}_\upsilon(\lambda_kt)}{\lambda_k\alpha}.
\end{eqnarray}

\textbf{Notation.}  The symbol $M \lesssim K$ means that there exists $\gamma> 0$ such that
$M \leq \gamma K$, where $\gamma$ is a constant.

\section{Direct problem for the q-heat equation}\label{S3}

We start to study  a Cauchy problem: 
\begin{eqnarray}\label{additive3.1}
D_{q,t}u(t)+\upsilon(t)\mathcal{L}u(t)=f(t), \;\;\;u\in{H},\;\;t>0,
\end{eqnarray} 
with the initial condition
\begin{eqnarray}\label{additive3.2}
 u(0)=\varphi \in{H}. 
\end{eqnarray}

\begin{thm}\label{thm3.1}
Assume that  Assumption \ref{a2.1} holds. Let $d\in\mathbb{R}$,  $\varphi \in \mathcal{H}^{d+2}_{\mathcal{L}}$ and $f\in L^\infty\left( [0, T]; \mathcal{H}^{d+2}_{\mathcal{L}}\right)$. Then there exists a unique solution of Problem (\ref{additive3.1})-(\ref{additive3.2}):
\begin{eqnarray}\label{additive3.3}
u\in C^1_q\left([0,T];\mathcal{H}^{d+2}_{\mathcal{L}}\right) \cap L^\infty\left([0,T];\mathcal{H}^{d+2}_{\mathcal{L}}\right).
\end{eqnarray}
Moreover, this solution can be written in the form
\begin{eqnarray}\label{additive3.4}
u(t)&=&\sum\limits_{k\in{I}} \left[ \langle\varphi,\phi_k\rangle_H \gamma_\upsilon(\lambda_kt)\right.\nonumber\\
&+&\left. \gamma_\upsilon (\lambda_kt)\int\limits_0^t \gamma^{-1}_\upsilon(q\lambda_ks)  \langle f(s),\phi_k\rangle_Hd_qs\right] \phi_k, 
\end{eqnarray}
which satisfies the estimate
\begin{eqnarray}\label{additive3.4A}
 \|u(t)\|_{\mathcal{H}^{d+2}_{\mathcal{L}}}+ \|D_qu(t)\|^2_{\mathcal{H}^d_{\mathcal{L}}} &\lesssim& 
 \| \varphi \|^2_{\mathcal{H}^{d+2}_{\mathcal{L}}}
+\|f(t)\|^2_{\mathcal{H}^{d+2}_{\mathcal{L}}}. 
\end{eqnarray}
\end{thm}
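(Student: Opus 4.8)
The plan is to diagonalise (\ref{additive3.1})--(\ref{additive3.2}) along the eigenbasis $\{\phi_k\}_{k\in I}$ and reduce it to a family of scalar $q$-difference equations that can be solved explicitly. Writing $u_k(t)=\langle u(t),\phi_k\rangle_H$, $\varphi_k=\langle\varphi,\phi_k\rangle_H$, $f_k(t)=\langle f(t),\phi_k\rangle_H$ and pairing the equation with $\phi_k$, one obtains
\begin{eqnarray*}
D_{q,t}u_k(t)+\lambda_k\upsilon(t)u_k(t)=f_k(t),\qquad u_k(0)=\varphi_k.
\end{eqnarray*}
I would first observe that $t\mapsto\gamma_\upsilon(\lambda_kt)$ solves the homogeneous equation with value $1$ at $t=0$: applying the Leibniz rule (\ref{additive2.2}) to $\gamma_\upsilon(\lambda_k\cdot)\,\gamma^{-1}_\upsilon(\lambda_k\cdot)\equiv1$ and using (\ref{additive2.8}) yields $D_q[\gamma_\upsilon(\lambda_kt)]=-\lambda_k\upsilon(t)\gamma_\upsilon(\lambda_kt)$. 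Then, looking for $u_k(t)=\gamma_\upsilon(\lambda_kt)w_k(t)$ and using (\ref{additive2.2}) once more, the equation collapses to $D_qw_k(t)=\gamma^{-1}_\upsilon(q\lambda_kt)f_k(t)$, so by (\ref{additive2.4}) $w_k(t)=\varphi_k+\int_0^t\gamma^{-1}_\upsilon(q\lambda_ks)f_k(s)\,d_qs$. This is precisely the representation (\ref{additive3.4}).

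Next I would prove that the series (\ref{additive3.4}) converges in the spaces claimed in (\ref{additive3.3}) and establish (\ref{additive3.4A}). By (\ref{additive2.7A}) and $E_q^{x}\geq1$ for $x\geq0$ one has $0<\gamma_\upsilon(\lambda_kt)\leq 1/E_q^{\alpha\lambda_kt}\leq1$, which controls the first summand of $u_k(t)$ by $|\varphi_k|$. For the Duhamel term I would rewrite, via (\ref{additive2.8}), $\gamma^{-1}_\upsilon(q\lambda_ks)=\frac{1}{\lambda_k\upsilon(s)}D_q[\gamma^{-1}_\upsilon(\lambda_ks)]$, and since $D_q[\gamma^{-1}_\upsilon(\lambda_ks)]\geq0$ argue exactly as in (\ref{additive2.9})--(\ref{additive2.10}):
\begin{eqnarray*}
\gamma_\upsilon(\lambda_kt)\Big|\int_0^t\gamma^{-1}_\upsilon(q\lambda_ks)f_k(s)\,d_qs\Big|
&\leq&\frac{\gamma_\upsilon(\lambda_kt)}{\lambda_k\alpha}\Big(\sup_{0\leq s\leq T}|f_k(s)|\Big)\int_0^t D_q[\gamma^{-1}_\upsilon(\lambda_ks)]\,d_qs\\
&\leq&\frac{1}{\lambda_k\alpha}\sup_{0\leq s\leq T}|f_k(s)|,
\end{eqnarray*}
the last step using (\ref{additive2.4}) and $\gamma^{-1}_\upsilon(0)=1$. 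Hence $|u_k(t)|\lesssim|\varphi_k|+\lambda_k^{-1}\sup_{[0,T]}|f_k|$; multiplying by $\lambda_k^{d+2}$, summing over $k\in I$ and invoking Plancherel (\ref{additive2.6}) gives the bound on $\|u(t)\|_{\mathcal{H}^{d+2}_{\mathcal{L}}}$ in (\ref{additive3.4A}), and in particular $u\in L^\infty([0,T];\mathcal{H}^{d+2}_{\mathcal{L}})$. For the $q$-derivative I would not differentiate the series but read it off the equation, $D_qu_k(t)=f_k(t)-\lambda_k\upsilon(t)u_k(t)$, so that by Assumption \ref{a2.1}
\begin{eqnarray*}
\|D_qu(t)\|^2_{\mathcal{H}^{d}_{\mathcal{L}}}&=&\sum_{k\in I}\lambda_k^{d}\big|f_k(t)-\lambda_k\upsilon(t)u_k(t)\big|^2\\
&\lesssim&\|f(t)\|^2_{\mathcal{H}^{d}_{\mathcal{L}}}+\beta^2\|u(t)\|^2_{\mathcal{H}^{d+2}_{\mathcal{L}}},
\end{eqnarray*}
and combining this with the previous estimate (and $\lambda_k\geq\lambda_0$) yields (\ref{additive3.4A}) and the membership (\ref{additive3.3}).

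It remains to check that (\ref{additive3.4}) is genuinely a solution and that it is the only one. Since every $u_k$ solves the scalar equation, the partial sums of (\ref{additive3.4}) converge in $\mathcal{H}^{d+2}_{\mathcal{L}}$ uniformly in $t\in[0,T]$, and $D_{q,t}$ acts on $u$ only through its values at $t$ and $qt$, one may apply $D_{q,t}$ and $\mathcal{L}$ to (\ref{additive3.4}) term by term and recover (\ref{additive3.1})--(\ref{additive3.2}). Uniqueness is immediate: if $u,\tilde u$ are two solutions then $v=u-\tilde u$ solves the problem with $\varphi=0$, $f\equiv0$, so each $v_k$ satisfies $D_qv_k+\lambda_k\upsilon v_k=0$, $v_k(0)=0$, whence $v_k(t)=\gamma_\upsilon(\lambda_kt)\cdot 0\equiv0$ and $v\equiv0$. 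I expect the only real technical point to be the Duhamel estimate above, namely extracting the decay factor $\lambda_k^{-1}$ from the Jackson integral by means of (\ref{additive2.8}) and the mechanism behind (\ref{additive2.10}); once that is available, everything else reduces to Plancherel's identity together with the elementary bounds (\ref{additive2.7A})--(\ref{additive2.10}).
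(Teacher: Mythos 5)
Your proposal is correct and follows essentially the same route as the paper: eigenfunction expansion, explicit solution of the scalar $q$-ODEs via $\gamma_\upsilon(\lambda_k t)$ (your variation-of-constants substitution $u_k=\gamma_\upsilon(\lambda_k t)w_k$ is equivalent to the paper's multiplication by the integrating factor $\gamma^{-1}_\upsilon(\lambda_k t)$), Plancherel-based estimates using (\ref{additive2.7A})--(\ref{additive2.10}) and reading $D_qu_k$ off the equation, and the same uniqueness-by-subtraction argument. The only noteworthy difference is a slight sharpening in the Duhamel term, where you extract the decay factor $\lambda_k^{-1}$ by the mechanism of (\ref{additive2.10}) (so $f\in L^\infty([0,T];\mathcal{H}^{d}_{\mathcal{L}})$ would already suffice for that bound), whereas the paper uses the cruder estimate $\gamma_\upsilon(\lambda_kt)/\gamma_\upsilon(\lambda_ks)\le 1$ together with $\int_0^t|f_k(s)|\,d_qs\le T\sup_{0\le s\le T}|f_k(s)|$; both yield (\ref{additive3.4A}).
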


\begin{proof} {\it Existence.}  We start to solve the  equation  (\ref{additive3.1}). We can use the system of eigenfunctions, and look for a solution in the series form
\begin{eqnarray}\label{additive3.5}
 u(t)=\sum\limits_{k\in{I}} u_k(t)\phi_k,
\end{eqnarray} 
for each fixed $t > 0$. Such an expansion always exists due to completeness of the set of eigenfunctions  $\{\phi_k\}_{k=1}^\infty$ in $H$. The coefficients will then be given by the Fourier  coefficients formula: $ u_k(t)=\langle u(t), \phi_k\rangle_H$.

We can similarly expand the source function,
\begin{eqnarray}\label{additive3.6} 
f(t)=\sum\limits_{k\in{I}}f_k(t)\phi_k, \;\;\; 
 f_k(t)=\langle f(t), \phi_k\rangle_H.
\end{eqnarray}

From   (\ref{additive3.5}) we have $\mathcal{L}\phi_k=\lambda_k\phi_k, \; \; \; k\in{I}$. Hence,
\begin{eqnarray}\label{additive3.7} 
\mathcal{L}u(t)=\sum\limits_{k\in{I}} u_k(t) \lambda_k \phi_k
\end{eqnarray}
and 
\begin{eqnarray}\label{additive3.8} 
D_qu(t)=\sum\limits_{k\in{I}} D_qu_k(t)\phi_k.
\end{eqnarray}

Substituting (\ref{additive3.7}) and (\ref{additive3.8}) into the equation (\ref{additive3.1}),   we find 
\begin{eqnarray}\label{additive3.11} 
\sum\limits_{k\in{I}}  \left[D_qu_k(t)+\lambda_k\upsilon(t)u_k(t)\right]\phi_k=\sum\limits_{k\in{I}}  f_k(t)\phi_k.
\end{eqnarray}

But then, due to the completeness,
\begin{eqnarray}\label{additive3.12} 
D_qu_k(t)+\lambda_k\upsilon(t)u_k(t)= f_k(t), \;\;\;k\in{I},  
\end{eqnarray}  
 which are ODEs for the coefficients $u_k(t)$ of the series (\ref{additive3.5}). Using the integrating factor $\gamma^{-1}_\upsilon(\lambda_kt)$  and (\ref{additive2.2}) and (\ref{additive2.8}), we can rewrite this equation as
 
\begin{eqnarray}\label{additive3.13} 
\gamma^{-1}_\upsilon(q\lambda_kt)f_k(t)&=&\gamma^{-1}_\upsilon(q\lambda_kt)D_qu_k(t)+\gamma^{-1}_\upsilon(q\lambda_kt)\lambda_k\upsilon(t)u_k(t)\nonumber\\
&=&\gamma^{-1}_\upsilon(q\lambda_kt)D_qu_k(t)+D_q\left[\gamma^{-1}_\upsilon(\lambda_kt)\right] u_k(t)\nonumber\\
&=& D_q\left[\gamma^{-1}_\upsilon(\lambda_kt)u_k(t)\right].  
\end{eqnarray}

Form (\ref{additive2.4}) and (\ref{additive3.11}) we get
\begin{eqnarray*} 
\int\limits_0^t D_q\left[\gamma^{-1}_\upsilon(\lambda_ks)u_k(s)\right]d_qs&=& \int\limits_0^t\gamma^{-1}_\upsilon(q\lambda_ks)f_k(s)d_qs\\
&\Longrightarrow& \gamma^{-1}_\upsilon(\lambda_kt)u_k(t)=u_k(0)+ \int\limits_0^t\gamma^{-1}_\upsilon(q\lambda_ks)f_k(s)d_qs\\
&\Longrightarrow& u_k(t)=\gamma_\upsilon (\lambda_kt)u_k(0)+ \gamma_\upsilon (\lambda_kt)\int\limits_0^t\gamma^{-1}_\upsilon(q\lambda_ks)f_k(s)d_qs.
\end{eqnarray*}

But the initial condition (\ref{additive3.2}) and (\ref{additive3.5}) imply  
\begin{eqnarray*} 
 u(0)=\sum\limits_{k\in{I}} u_k(0)\phi_k=\varphi \;\;\;\Rightarrow\;\;\;  u_k(0)= \langle\varphi,\phi_k\rangle_H.
 \end{eqnarray*}
 
 Thus, 
\begin{eqnarray}\label{additive3.13} 
  u_k(t)= \langle\varphi,\phi_k\rangle_H\gamma_\upsilon(\lambda_kt)+  \gamma_\upsilon(\lambda_kt)\int\limits_0^t \gamma^{-1}_\upsilon(q\lambda_ks) \langle f(s),\phi_k\rangle_Hd_qs. 
\end{eqnarray}

So the solution can be written in the series form as
\begin{eqnarray*} 
u(t)=\sum\limits_{k\in{I}} \left[ \langle\varphi,\phi_k\rangle_H \gamma_\upsilon(\lambda_kt) + \gamma_\upsilon (\lambda_kt)\int\limits_0^t \gamma^{-1}_\upsilon(q\lambda_ks)  \langle f(s),\phi_k\rangle_Hd_qs\right] \phi_k, 
\end{eqnarray*}
 giving (\ref{additive3.4}).

{\it Convergence .} By using  Assumption \ref{a2.1} and (\ref{additive2.6A}) and (\ref{additive2.7A}) we obtain that 
\begin{eqnarray*}
\frac{\gamma_\upsilon(\lambda_kt)}{\gamma_\upsilon(\lambda_ks)}\overset{ \text{(2.7)} }{\leq}\frac{E_q^{\beta\lambda_ks}}{E_q^{\alpha\lambda_kt}}\leq
\frac{E_q^{\alpha\lambda_ks}}{E_q^{\alpha\lambda_kt}}\leq1, \;\;\;0<s\leq t,
\end{eqnarray*}
since $E_q^{\alpha\lambda_kt}$ is an increasing function for $t>0$. Indeed, using (\ref{additive2.6A}) we see that
\begin{eqnarray*}
\frac{d}{dt}\left[E_q^{\alpha\lambda_kt}\right] = 
\sum\limits_{m=1}^\infty q^{m(m-1)/2}\left[\alpha\lambda_k\right]^k\frac{ m}{[m]_q}t^{m-1}\geq 0.
\end{eqnarray*}
for $t>0$.

Hence, form (\ref{additive2.7A}) and  (\ref{additive3.13})     it  follows that 
\begin{eqnarray}\label{additive3.14} 
\left|\langle u(t),\phi_k\rangle_H\right|&\overset{ \text{(3.13)} }{\leq}&\gamma_\upsilon(\lambda_kt) \left|\langle\varphi,\phi_k\rangle_H\right| 
+  \int\limits_0^t\frac{\gamma_\upsilon(\lambda_kt)}{\gamma_\upsilon(\lambda_ks)}\left|\langle f(s),\phi_k\rangle_H\right|d_qs\nonumber\\
&\leq&\frac{1}{E_q^{\alpha{t}}}\left|\langle\varphi,\phi_k\rangle_H\right| 
+\int\limits_0^t\left|\langle f(s),\phi_k\rangle_H\right|d_qs\nonumber\\
&\leq&\max\left\{\frac{1}{E_q^{\alpha{t}}},1\right\}\left[\left|\langle\varphi,\phi_k\rangle_H\right| 
+\int\limits_0^t\left|\langle f(s),\phi_k\rangle_H\right|d_qs\right], 
\end{eqnarray}
and  using   (\ref{additive3.12}),  we get that
\begin{eqnarray}\label{additive3.15} 
\lambda^{d/2}_k\left|D_qu_k(t) \right|&\overset{ \text{(3.11)} }{\leq}& 
\lambda^{d/2+1}_k \upsilon(t) \left|\langle\varphi,\phi_k\rangle_H\right| +\lambda^{d/2}_k\left|\langle f(t),\phi_k\rangle_H\right|\nonumber\\
&\leq&\beta \left|\langle\lambda^{d/2+1}_k \varphi,\phi_k\rangle_H\right|+\lambda^{-1}_k \left|\langle \lambda^{d/2+1}_kf(t), \phi_k\rangle_H\right|\nonumber\\
&\leq&  \beta\left|\langle\mathcal{L}^{d/2+1}\varphi,\phi_k\rangle_H\right|  +\lambda^{-1}_0 \left|\langle \mathcal{L}^{d/2+1}f(s),\phi_k\rangle_H\right| \nonumber\\
&\leq&\max\{\beta,\lambda^{-1}_0\}\left[\left|\langle\mathcal{L}^{d/2+1}\varphi,\phi_k\rangle_H\right|+\sup\limits_{0\leq{s}\leq{T}}\left|\langle \mathcal{L}^{d/2+1}f(s),\phi_k\rangle_H\right|\right] 
\end{eqnarray} 
and  
\begin{eqnarray}\label{additive3.16} 
\lambda^{d/2}_k\left|\langle \mathcal{L}u(t),\phi_k\rangle_H\right| 
&=&\lambda^{d/2}_k\left|\langle \lambda_k u_k(t),\phi_k\rangle_H\right|  \nonumber\\
&\overset{ \text{(3.14)} }{\lesssim}&  \left|\langle\lambda^{d/2+1}_k \varphi,\phi_k\rangle_H\right| 
+\int\limits_0^t\left|\langle \lambda^{d/2+1}_k f(s),\phi_k\rangle_H\right|d_qs   \nonumber\\
 &=& \left|\langle\mathcal{L}^{d/2+1}\varphi,\phi_k\rangle_H\right| 
+\int\limits_0^t\left|\langle \mathcal{L}^{d/2+1}f(s),\phi_k\rangle_H\right|d_qs\nonumber\\
&\leq& \left|\langle\mathcal{L}^{d/2+1}\varphi,\phi_k\rangle_H\right| +T\sup\limits_{0\leq{s}\leq{T}}\left|\langle \mathcal{L}^{d/2+1}f(s),\phi_k\rangle_H\right| \nonumber\\
&\leq&\max\{1,T\}\left[\left|\langle\mathcal{L}^{d/2+1}\varphi,\phi_k\rangle_H\right|+\sup\limits_{0\leq{s}\leq{T}}\left|\langle \mathcal{L}^{d/2+1}f(s),\phi_k\rangle_H\right| \right].
\end{eqnarray}

Since $\varphi\in  \mathcal{H}^{d+2}_{\mathcal{L}}$, $f\in L^\infty\left([0, T]; \mathcal{H}^{d+2}_{\mathcal{L}}\right)$,  and using the Plancherel identity,  we have
 \begin{eqnarray*} 
\|D_qu(t)\|^2_{\mathcal{H}^d_{\mathcal{L}}}&=&\|\mathcal{L}^\frac{d}{2}D_qu(t)\|^2_H\\
&=&\sum\limits_{k\in{I}}\lambda_k^d\left|D_qu_k(t)\right|^2
\nonumber\\
&\overset{ \text{(3.15)} }{\lesssim}&  \sum\limits_{k\in{I}}\left|\langle\mathcal{L}^{d/2+1}\varphi,\phi_k\rangle_H\right|^2+ \sup\limits_{0\leq{s}\leq{T}}\sum\limits_{k\in{I}}\left|\langle \mathcal{L}^{d/2+1}f(s),\phi_k\rangle_H\right|^2 \nonumber\\
&=& \|\varphi\|^2_{\mathcal{H}^{d+2}_{\mathcal{L}}} +       \|f \|^2_{L^\infty\left( [0, T];\mathcal{H}^{d+2}_{\mathcal{L}}\right)} <\infty, 
\end{eqnarray*}
and  
\begin{eqnarray*} 
\|\mathcal{L}u(t)\|^2_{\mathcal{H}^d_{\mathcal{L}}}&=&\|\mathcal{L}^{d/2}\mathcal{L}u(t)\|^2_H\\&=&\sum\limits_{k\in{I}}\lambda^d_k\left|\langle \mathcal{L}u(t),\phi_k\rangle_H\right|^2\nonumber\\
&\overset{ \text{(3.16)} }{\lesssim}& \sum\limits_{k\in{I}}\left|\langle\mathcal{L}^{d/2+1}\varphi,\phi_k\rangle_H\right|^2+ \sup\limits_{0\leq{s}\leq{T}}\sum\limits_{k\in{I}}\left|\langle \mathcal{L}^{d/2+1}f(s),\phi_k\rangle_H\right|^2\nonumber\\
&\leq&  \| \varphi\|^2_{\mathcal{H}^{d+2}_{\mathcal{L}}} +\|f \|^2_{L^\infty\left( [0, T];\mathcal{H}^{d+2}_{\mathcal{L}}\right)}   <\infty.
\end{eqnarray*}

Hence, the above estimates imply that
\begin{eqnarray*} 
\|\mathcal{L}u(t)\|^2_{\mathcal{H}^d_{\mathcal{L}}}+\|D_qu(t)\|^2_{\mathcal{H}^d_{\mathcal{L}}}&\lesssim& \| \varphi\|^2_{\mathcal{H}^{d+2}_{\mathcal{L}}} +\|f \|^2_{L^\infty\left( [0, T];\mathcal{H}^{d+2}_{\mathcal{L}}\right)}  <\infty,
\end{eqnarray*}
which   means that $u\in C_q^1\left([0,T]; \mathcal{H}^{d+2}_{\mathcal{L}}\right) \cap L^\infty\left([0,T];\mathcal{H}^{d+2}_{\mathcal{L}}\right)$ and this yields the estimate (\ref{additive3.4A}).

{\it Uniqueness.} It only remains to prove the uniqueness of the solution. We assume the opposite, namely that there exist the functions $u(t)$ and $v(t)$, which are two different solutions of Problem (\ref{additive3.1})-(\ref{additive3.2}). Thus, we have that
$$
\left\{
  \begin{array}{ll}
    D_{q,t}u(t)+\varphi(t)\mathcal{L}u(t)=f(t),& \hbox{$t>0$}, \\
    u(0)=\varphi\in{H},  
  \end{array}
\right.
$$
and
$$
\left\{
  \begin{array}{ll}
    D_{q,t}v(t)+\varphi(t)\mathcal{L}v(t)=f(t),& \hbox{$  t>0$}, \\
    v(0)=\varphi\in{H}.
  \end{array}
\right.
$$

We define $W(t)=u(t)-v(t)$. Then the function $W(t)$ is a solution of the following problem
\begin{eqnarray*}
\left\{
  \begin{array}{ll}
    D_{q}w(t)+\varphi(t)\mathcal{L}w(t)=f(t),& \hbox{$t>0$}, \\
    w(0, x)=\varphi\in{H}.
  \end{array}
\right.
\end{eqnarray*}
From  (\ref{additive3.5}) it follows that $W(t)\equiv0$, that is,  $u(t)\equiv v(t)$ and this contradiction to our assumption proves the uniqueness of the solution. The proof is complete. 
\end{proof}

\section{ Inverse source problem}\label{S4}

In this subsection we consider the following problem: find a pair of functions $u(t)$ and $f$ in the space $H$ satisfying the  $q$-heat equation:
\begin{eqnarray}\label{additive4.1}
D_{q,t}u(t)+\upsilon(t)\mathcal{L}u(t)=g(t)f, \;\;\;f\in{H},\;\;t>0,
\end{eqnarray} 
with the initial condition 
\begin{eqnarray}\label{additive4.2}
 u(0)=\varphi\in{H}, 
\end{eqnarray} 
and the final condition;
\begin{eqnarray}\label{additive4.3}
u(T)=\eta\in{H}.
\end{eqnarray}

In the sequel we will make use of the following:
\begin{Assum}\label{a4.1} We assume that $g:[0, T]\rightarrow\mathbb{R}$ is a function  satisfying: 
\begin{itemize}
\item $g(s)>0$ for $0<s<T$.
\item $0<\alpha_0\leq\int\limits_0^Tg(s)d_qs\leq\beta_0<\infty$, 
\end{itemize}
where $\alpha_0,\beta_0>0$. 
\end{Assum}

\begin{thm}\label{thm4.1}
Assume that   Assumption \ref{a2.1} and  Assumption \ref{a4.1} hold.  Let $d\in\mathbb{R}$, $\varphi, \eta \in \mathcal{H}^{d+2}_{\mathcal{L}}$. Then there exists a unique solution of Problem (\ref{additive4.1})-(\ref{additive4.3}):
\begin{eqnarray*} 
u\in C^1_q\left([0,T];\mathcal{H}^d_{\mathcal{L}}\right) \cap L^\infty\left([0,T];\mathcal{H}^{d+2}_{\mathcal{L}}\right),\;\;\;
f\in   \mathcal{H}^{d+2}_{\mathcal{L}} .
\end{eqnarray*}
Moreover, this   $f$ and the solution $u$ can be represented by
\begin{eqnarray*} 
f &=&\sum\limits_{k\in{I}} \left[\frac{\eta_k\gamma^{-1}_\upsilon(\lambda_kT)-\langle\varphi,\phi_k\rangle_H}{\int\limits_0^T\gamma^{-1}_\upsilon(q\lambda_ks)g(s)d_qs}\right]\phi_k,
\end{eqnarray*}
and 
\begin{eqnarray*} 
u(t)&=&\sum\limits_{k\in{I}} \left\{\gamma_\upsilon(\lambda_kt)\left[\langle\varphi,\phi_k\rangle_H+ \langle f,\phi_k\rangle_H\int\limits_0^t\gamma^{-1}_\upsilon(q\lambda_ks)g(s)d_qs\right]\right\}\phi_k,  
\end{eqnarray*} 
respectively. Here the solution $u$ satisfies the estimate
\begin{eqnarray}\label{additive3.4A}
 \|u(t)\|_{\mathcal{H}^{d+2}_{\mathcal{L}}}+ \|D_qu(t)\|^2_{\mathcal{H}^d_{\mathcal{L}}} &\lesssim& 
 \| \varphi \|^2_{\mathcal{H}^{d+2}_{\mathcal{L}}}
+\|\eta\|^2_{\mathcal{H}^{d+2}_{\mathcal{L}}}. 
\end{eqnarray}
\end{thm}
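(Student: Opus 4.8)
The plan is to mimic the structure of the proof of Theorem \ref{thm3.1}, reducing the inverse problem to the explicit Fourier solution and then using Assumption \ref{a4.1} to invert the overdetermination condition. First I would expand all data in the orthonormal basis $\{\phi_k\}_{k\in I}$: write $u(t)=\sum_k u_k(t)\phi_k$, $\varphi=\sum_k\varphi_k\phi_k$, $\eta=\sum_k\eta_k\phi_k$, and seek $f=\sum_k f_k\phi_k$ with the unknown scalars $f_k$. By the same integrating-factor computation as in \eqref{additive3.13} (using \eqref{additive2.2} and \eqref{additive2.8}), equation \eqref{additive4.1} becomes the scalar $q$-ODE $D_qu_k(t)+\lambda_k\upsilon(t)u_k(t)=g(t)f_k$, whose solution with $u_k(0)=\varphi_k$ is
\begin{eqnarray*}
u_k(t)=\gamma_\upsilon(\lambda_kt)\left[\varphi_k+f_k\int_0^t\gamma^{-1}_\upsilon(q\lambda_ks)g(s)d_qs\right].
\end{eqnarray*}
Imposing \eqref{additive4.3}, i.e. $u_k(T)=\eta_k$, and solving for $f_k$ gives
\begin{eqnarray*}
f_k=\frac{\eta_k\gamma^{-1}_\upsilon(\lambda_kT)-\varphi_k}{\int_0^T\gamma^{-1}_\upsilon(q\lambda_ks)g(s)d_qs},
\end{eqnarray*}
which is exactly the claimed formula, once I check the denominator never vanishes.

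The key analytic point — and the main obstacle — is showing the denominator $\int_0^T\gamma^{-1}_\upsilon(q\lambda_ks)g(s)d_qs$ is bounded away from zero and from infinity in a way that matches the regularity bookkeeping. Positivity is clear since $g>0$ on $(0,T)$ and $\gamma^{-1}_\upsilon>0$; for the two-sided bound I would combine Assumption \ref{a4.1} with the monotonicity of $\gamma^{-1}_\upsilon(q\lambda_ks)$ in $s$: since $D_q[\gamma^{-1}_\upsilon(\lambda_ks)]=\lambda_k\upsilon(s)\gamma^{-1}_\upsilon(q\lambda_ks)\ge 0$, the function $s\mapsto\gamma^{-1}_\upsilon(q\lambda_ks)$ is nondecreasing, so
\begin{eqnarray*}
\gamma^{-1}_\upsilon(0)\int_0^Tg(s)d_qs\le\int_0^T\gamma^{-1}_\upsilon(q\lambda_ks)g(s)d_qs\le\gamma^{-1}_\upsilon(q\lambda_kT)\int_0^Tg(s)d_qs,
\end{eqnarray*}
hence $\alpha_0\le\int_0^T\gamma^{-1}_\upsilon(q\lambda_ks)g(s)d_qs\le\beta_0\,\gamma^{-1}_\upsilon(q\lambda_kT)$. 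The lower bound $\alpha_0$ is what I need to estimate $|f_k|$; the factor $\gamma^{-1}_\upsilon(\lambda_kT)$ appearing in the numerator is exactly absorbed by the ratio $\gamma_\upsilon(\lambda_kt)/\gamma_\upsilon(\lambda_kT)\le 1$ for $t\le T$ (as in the Convergence step of Theorem \ref{thm3.1}, via \eqref{additive2.7A}), so no blow-up in $\lambda_k$ survives.

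With $f_k$ in hand I would estimate $\lambda_k^{d/2+1}|f_k|\lesssim \lambda_k^{d/2+1}(|\eta_k|\gamma^{-1}_\upsilon(\lambda_kT)+|\varphi_k|)/\alpha_0$; then, substituting back into the formula for $u_k(t)$, bound $\gamma_\upsilon(\lambda_kt)\gamma^{-1}_\upsilon(\lambda_kT)\le 1$ and $\int_0^t\gamma^{-1}_\upsilon(q\lambda_ks)g(s)d_qs\le\beta_0\gamma^{-1}_\upsilon(q\lambda_kT)$, together with $\gamma_\upsilon(\lambda_kt)\gamma^{-1}_\upsilon(q\lambda_kT)\le\gamma_\upsilon(\lambda_kt)\gamma^{-1}_\upsilon(\lambda_kT)\le 1$ (using monotonicity once more, since $q<1$ and $\gamma^{-1}_\upsilon$ nondecreasing gives $\gamma^{-1}_\upsilon(q\lambda_kT)\le\gamma^{-1}_\upsilon(\lambda_kT)$), to obtain $\lambda_k^{d/2+1}|u_k(t)|\lesssim\lambda_k^{d/2+1}(|\varphi_k|+|\eta_k|)$. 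Squaring, summing over $k\in I$, and applying Plancherel \eqref{additive2.6} yields $\|u(t)\|_{\mathcal{H}^{d+2}_{\mathcal{L}}}\lesssim\|\varphi\|_{\mathcal{H}^{d+2}_{\mathcal{L}}}+\|\eta\|_{\mathcal{H}^{d+2}_{\mathcal{L}}}$ and $f\in\mathcal{H}^{d+2}_{\mathcal{L}}$; the bound on $\|D_qu(t)\|_{\mathcal{H}^d_{\mathcal{L}}}$ follows from the scalar equation $D_qu_k=g(t)f_k-\lambda_k\upsilon(t)u_k$ exactly as in \eqref{additive3.15}, giving \eqref{additive3.4A}. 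For uniqueness I would argue as in Theorem \ref{thm3.1}: if $(u,f)$ and $(\tilde u,\tilde f)$ both solve \eqref{additive4.1}--\eqref{additive4.3}, then $w=u-\tilde u$, $h=f-\tilde f$ satisfy the same system with zero initial data, so $w_k(t)=\gamma_\upsilon(\lambda_kt)h_k\int_0^t\gamma^{-1}_\upsilon(q\lambda_ks)g(s)d_qs$, and $w_k(T)=0$ forces $h_k\cdot\big(\int_0^T\gamma^{-1}_\upsilon(q\lambda_ks)g(s)d_qs\big)=0$; since the integral is $\ge\alpha_0>0$, $h_k=0$ for all $k$, hence $f=\tilde f$ and then $w\equiv 0$.
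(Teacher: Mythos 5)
Your reduction to the scalar $q$-ODE, the explicit formulas for $u_k(t)$ and $f_k$, and your uniqueness argument (which is in fact spelled out more completely than in the paper) all match the paper's scheme. The genuine gap is in the convergence/stability step, and it is not cosmetic: the two monotonicity inequalities on which your estimates hinge are stated backwards. Since $\gamma_\upsilon^{-1}(\lambda_k t)=\prod_{i\geq 0}\left(1+(1-q)\lambda_k t q^i\upsilon(q^i t)\right)$ is nondecreasing in $t$ with $\gamma_\upsilon^{-1}(0)=1$, for $t\leq T$ one has $\gamma_\upsilon(\lambda_k t)/\gamma_\upsilon(\lambda_k T)=\gamma_\upsilon^{-1}(\lambda_k T)/\gamma_\upsilon^{-1}(\lambda_k t)\geq 1$ (as large as $\gamma_\upsilon^{-1}(\lambda_k T)$ at $t=0$), not $\leq 1$; in Theorem \ref{thm3.1} the ratio $\gamma_\upsilon(\lambda_k t)/\gamma_\upsilon(\lambda_k s)\leq 1$ is valid only because there $s\leq t$, i.e.\ the larger argument sits in the numerator. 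Likewise your claim $\gamma_\upsilon(\lambda_k t)\gamma_\upsilon^{-1}(\lambda_k T)\leq 1$ fails for $t<T$. Consequently the only bound you actually establish for the source coefficients is $|f_k|\leq \alpha_0^{-1}\left(\gamma_\upsilon^{-1}(\lambda_k T)|\langle\eta,\phi_k\rangle_H|+|\langle\varphi,\phi_k\rangle_H|\right)$, and the factor $\gamma_\upsilon^{-1}(\lambda_k T)\geq E_q^{\alpha\lambda_k T}$ is unbounded in $k$ (it grows faster than any power of $\lambda_k$); since $f$ does not depend on $t$, there is nothing available to absorb it, so neither $f\in\mathcal{H}^{d+2}_{\mathcal{L}}$ nor the estimate (\ref{additive3.4A}) follows from your chain of inequalities, and the same defect propagates into your bounds for $u_k(t)$ and $D_qu_k(t)$.

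What the paper does at exactly this point, and what your proposal is missing, is to keep the numerator $\gamma_\upsilon^{-1}(\lambda_k T)$ and the denominator $\int_0^T\gamma_\upsilon^{-1}(q\lambda_k s)g(s)\,d_qs$ together and bound the \emph{ratio} uniformly in $k$: inequality (\ref{additive4.8}) asserts that this ratio is at most $E_q^{\beta T}/\alpha_0$, after which $|f_k|\lesssim |\langle\eta,\phi_k\rangle_H|+|\langle\varphi,\phi_k\rangle_H|$ and the remaining estimates follow by Plancherel as you outline. The point is that the denominator itself grows like $\gamma_\upsilon^{-1}(\lambda_k T)$ (compare (\ref{additive2.9})), so only a ratio bound can produce constants independent of $k$; replacing the denominator by its crude lower bound $\alpha_0$ (via $\gamma_\upsilon^{-1}\geq 1$), as you do, throws away precisely the growth that must cancel the numerator. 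To repair your argument you would need to prove a lower bound of the form $\int_0^T\gamma_\upsilon^{-1}(q\lambda_k s)g(s)\,d_qs\gtrsim \gamma_\upsilon^{-1}(\lambda_k T)$ uniformly in $k$ (this is where Assumption \ref{a4.1} genuinely enters, and this step requires care), and then rerun your estimates for $f_k$ and $u_k(t)$; the rest of your plan would then go through.
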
 

\begin{proof} {\it Existence.}  Since the system $\{\phi_k\}_{k\in{I}}$ is a basis in the space $H$, we expand the pair of functions $(u(t), f)$ as follows:
\begin{eqnarray}\label{additive4.4}
 u(t)=\sum\limits_{k\in{I}} u_k(t)\phi_k,\;\;\;
f=\sum\limits_{k\in{I}} f_k\phi_k,
\end{eqnarray} 
where $u_k(t)=\langle u(t), \phi_k\rangle_H$ and $u_k(t)=\langle f, \phi_k\rangle_H$.  
By repeating the arguments of Theorem \ref{thm3.1}, we start from the formula (\ref{additive3.12}). For the last term of the equation (\ref{additive3.12}), we have
\begin{eqnarray}\label{additive4.5} 
D_qu_k(t)+\lambda_k\upsilon(t)u_k(t)=g(t)f_k, \;\;\;k\in{I},  
\end{eqnarray}
and a general solution of the equation (\ref{additive4.6}) is given in the following form:
\begin{eqnarray}\label{additive4.6} 
u_k(t)=\gamma_\upsilon(\lambda_kt)\left[u_k(0)+ f_k\int\limits_0^t\gamma^{-1}_\upsilon(q\lambda_ks)g(s)d_qs\right], 
\end{eqnarray}
 where the constants $u_k(0), f_k$ are unknown. By using the conditions  (\ref{additive4.2}) and (\ref{additive4.3}) and Assumption \ref{a4.1}   we find  $ u_k(0)= \langle\varphi,\phi_k\rangle_H$ and
\begin{eqnarray}\label{additive4.7} 
\langle\eta,\phi_k\rangle_H=u_k(T)&=&\gamma_\upsilon (\lambda_kT)\left[\langle\varphi,\phi_k\rangle_H+  f_k\int\limits_0^T\gamma^{-1}_\upsilon(q\lambda_ks)g(s)d_qs\right].\nonumber\\
&\Rightarrow&f_k=\frac{\gamma^{-1}_\upsilon(\lambda_kT)\langle\eta,\phi_k\rangle_H-\langle\varphi,\phi_k\rangle_H}{\int\limits_0^T\gamma^{-1}_\upsilon(q\lambda_ks)g(s)d_qs}. 
\end{eqnarray}

Substituting $f_k$, $u_k(t)$ into the expansions (\ref{additive4.5}), we find that 
\begin{eqnarray*} 
f&=&\sum\limits_{k\in{I}} \left[\frac{\langle\eta,\phi_k\rangle_H\gamma^{-1}_\upsilon(\lambda_kT)-\langle\varphi,\phi_k\rangle_H}{\int\limits_0^T\gamma^{-1}_\upsilon(q\lambda_ks)g(s)d_qs}\right]\phi_k,
\end{eqnarray*}
and
\begin{eqnarray*} 
u(t)&=&\sum\limits_{k\in{I}} \left\{\gamma_\upsilon(\lambda_kt)\left[\langle\varphi,\phi_k\rangle_H+ f_k\int\limits_0^t\gamma^{-1}_\upsilon(q\lambda_ks)g(s)d_qs\right]\right\}\phi_k.
\end{eqnarray*}

{\it Convergence}. From Assumption \ref{a4.1} and  (\ref{additive2.7}) and  (\ref{additive2.7A})  we get that 
\begin{eqnarray}\label{additive4.8} 
\frac{\gamma^{-1}_\upsilon(\lambda_kT)}{\int\limits_0^T\gamma^{-1}_\upsilon(q\lambda_ks)g(s)d_qs}\leq\frac{E_q^{\beta{T}}}{\int\limits_0^T g(s)d_qs} 
\leq\frac{E_q^{\beta{T}}}{\alpha_0}.  
\end{eqnarray}

Using  (\ref{additive2.7A}), (\ref{additive4.7}) and  (\ref{additive4.8}) we find that 
 
\begin{eqnarray}\label{additive4.9}
\left|f_k\right|&\leq&\frac{\gamma^{-1}_\upsilon(\lambda_kT)}{\int\limits_0^T\gamma^{-1}_\upsilon(q\lambda_ks)g(s)d_qs}\left\{\left|\langle\eta,\phi_k\rangle_H\right|+\gamma _\upsilon(\lambda_kT)\left|\langle\varphi,\phi_k\rangle_H\right|\right\}\nonumber\\
&\overset{ \text{(4.9)} }{\leq}&\frac{E_q^{\beta{T}}}{\alpha_0}\left\{ \left|\langle\eta,\phi_k\rangle_H\right|+\frac{1}{E_q^{\alpha{T}}}\left|\langle\varphi,\phi_k\rangle_H\right|\right\} \nonumber\\
&\leq&\frac{E_q^{\beta{T}}}{\alpha_0}  \left\{ \left|\langle \eta,\phi_k\rangle_H\right|+\left|\langle\varphi,\phi_k\rangle_H\right| \right\}.
\end{eqnarray}

Hence, 
\begin{eqnarray*}
\|f\|^2_{\mathcal{H}^{d+2}_{\mathcal{L}}}&=&\sum\limits_{k\in{I}}\lambda_k^{d+2}\left|f_k\right|^2\nonumber\\
&\overset{ \text{(4.10)} }{\lesssim}&  \left|\langle \lambda^{d/2+1}_k\eta,\phi_k\rangle_H\right|^2+\left|\langle\lambda^{d/2+1}_k\varphi,\phi_k\rangle_H\right|^2\nonumber\\
&=&   \|\eta\|^2_{\mathcal{H}^{d+2}_{\mathcal{L}}}+ \| \varphi\|^2_{\mathcal{H}^{d+2}_{\mathcal{L}}}<\infty,
\end{eqnarray*} 
which   means that $f\in   \mathcal{H}^{d+2}_{\mathcal{L}}$.

Form (\ref{additive2.7A}), (\ref{additive4.6}) and (\ref{additive4.9})  and Assumption \ref{a4.1}  it  follows that 
\begin{eqnarray}\label{additive4.10}
|u_k(t)|&\leq& \frac{\left|\langle\varphi,\phi_k\rangle_H\right|+  \left|f_k\right|\int\limits_0^t\gamma^{-1}_\upsilon(q\lambda_ks)g(s)d_qs}{E_q^{\alpha{t}}} \nonumber\\
&\leq& \frac{\left|\langle\varphi,\phi_k\rangle_H\right|+ E_q^{\beta{t}}\beta_0\left|f_k\right| }{E_q^{\alpha{t}}}\nonumber\\
&\leq&\frac{1}{E_q^{\alpha{t}}}\left|\langle\varphi,\phi_k\rangle_H\right|
+\beta_0\frac{E_q^{\beta{T}}}{E_q^{\alpha{t}}}\left|f_k\right|\nonumber\\
&\overset{ \text{(4.10)} }{\lesssim}& \left|\langle\varphi,\phi_k\rangle_H\right|+|\langle\eta,\phi_k\rangle_H|.  
\end{eqnarray}

By Assumption \ref{a2.1} and  (\ref{additive4.5})   and (\ref{additive4.10})  we get that 
\begin{eqnarray}\label{additive4.11}
\lambda^{d/2}_k\left|D_qu_k(t)\right|&\leq&\lambda^{d/2+1}_k|\upsilon(t)||u_k(t)|+\lambda^{d/2}_k|g(t)||f_k|\nonumber\\
&\leq&\beta\lambda^{d/2+1}_k|u_k(t)|+\frac{\beta_0}{\lambda_0}\lambda^{d/2+1}_k|f_k|\nonumber\\
&\overset{ \text{(4.10), (4.11)} }{\lesssim}&  \left|\langle\lambda^{d/2+1}_k\eta,\phi_k\rangle_H\right|+ \left|\langle\lambda^{d/2+1}_k\varphi,\phi_k\rangle_H\right| \nonumber\\
&=&  \left|\langle\mathcal{L}^{d/2+1}\eta,\phi_k\rangle_H\right|+ \left|\langle\mathcal{L}^{d/2+1}\varphi,\phi_k\rangle_H\right|.
\end{eqnarray}

Since $\eta,\varphi\in  \mathcal{H}^{d+2}_{\mathcal{L}}$   and the Plancherel identity  we have that 
\begin{eqnarray*} 
\|D_qu(t)\|^2_{\mathcal{H}^{d}_{\mathcal{L}}}&=&\sum\limits_{k\in{I}}\lambda_k^d\left|D_qu_k(t)\right|^2
\nonumber\\
&\overset{ \text{(4.12)} }{\lesssim}&\sum\limits_{k\in{I}}\left[\left|\langle\mathcal{L}^{d/2+1}\eta,\phi_k\rangle_H\right|^2+ \left|\langle\mathcal{L}^{d/2+1}\varphi,\phi_k\rangle_H\right|^2\right] \nonumber\\
&=&  \|\eta\|^2_{\mathcal{H}^{d+2}_{\mathcal{L}}}+\| \varphi\|^2_{\mathcal{H}^{d+2}_{\mathcal{L}}}<\infty, 
\end{eqnarray*}
and  
\begin{eqnarray*} 
\|\mathcal{L}u(t)\|^2_{\mathcal{H}^{d}_{\mathcal{L}}}&=&\sum\limits_{k\in{I}}\lambda_k^d\left|\langle \mathcal{L}u(t),\phi_k\rangle_H\right|^2\nonumber\\
&=&\sum\limits_{k\in{I}}\left[\lambda^{d/2+1}_k\left|u_k(t)\right|\right]^2\nonumber\\
&\lesssim& \sum\limits_{k\in{I}}\left[  \left|\langle\mathcal{L}^{d/2+1}\eta,\phi_k\rangle_H\right|^2+ \left|\langle\mathcal{L}^{d/2+1}\varphi,\phi_k\rangle_H\right|^2 \right]\nonumber\\
&=&   \|\eta\|^2_{\mathcal{H}^{d+2}_{\mathcal{L}}}+ \| \varphi\|^2_{\mathcal{H}^{d+2}_{\mathcal{L}}}   <\infty.
\end{eqnarray*}

Hence, the above estimates imply that
  $u\in C_q^1\left([0,T];\mathcal{H}^d_{\mathcal{L}}\right) \cap L^\infty\left([0,T];\mathcal{H}^{d+2}_{\mathcal{L}}\right)$ and  this yields the estimate (\ref{additive3.4A}).

{\it Uniqueness.} The part is similar above theorem \ref{thm3.1}. The proof is completed. 
\end{proof}

\section{Examples}\label{S5}

In this section we give several examples of the settings where our direct and inverse problems are applicable.

{$\blacksquare$ \it The q-Sturm–Liouville problem}: Let $H=L^2_q\left[0, a\right]$ be the space of all real-valued functions defined on $[0, a]$ such that 
\begin{eqnarray*}
\|f\|_{L^2_q\left[0, a\right]}:= \left(\int\limits_0^a |f(x)|^2d_qx\right)^\frac{1}{2}<\infty.  
\end{eqnarray*}

The space  $L^2_q\left[0, a\right]$ is a separable Hilbert space with the inner product:
\begin{eqnarray*}
\langle f,g\rangle:= \int\limits_0^a f(x) g(x)d_qx, \;\;\;f,g\in  L^2_q\left[0, a\right].
\end{eqnarray*}

Moreover, we   denote $C^2_{q,0}[0,a]$ the  space   of
all functions $y(\cdot)$  such that  $y$, $D_qy$ are continuous at zero.

M.~H.~Annaby and Z.~S.~Mansour considered  a basic $q$-Sturm–Liouville eigenvalue problem in the  Hilbert space $L^2_q\left[0, a\right]$ \cite[Chapter 3]{AM2005}:
\begin{equation}\label{additive5.1}
\mathcal{L}(y)=\left\{ \begin{array}{cl}
-\frac{1}{q}D_{q^{-1}}D_qy(x)+v(x)y(x)=\lambda y(x),\\
U_1(y)=a_{11}y(0)+a_{12}y(0),\\
U_2(y)=a_{21}y(a)+a_{22}y(a),
\end{array}\right.
\end{equation}
for $0\leq{x}\leq{a}<\infty$ and $\lambda\in\mathbb{C}$, where $v(\cdot)$  is a continuous at zero real valued function and $a_{ij}$,  $i,j\in\{1,2\}$ are
arbitrary real numbers such that the rank of the matrix $(a_{ij})_{1\leq{i,j}\leq2}$ is 2. The basic Sturm- Liouville eigenvalue problem (\ref{additive5.1})   is self adjoint on  $C^2_{q,0}[0,a]\cap L^2_q\left[0, 1\right]$(see \cite[Theorem 3.4.]{AM2012}). The eigenvalues and the eigenfunctions of the   problem (\ref{additive5.1}) have the following properties:
\begin{itemize}
\item The eigenvalues are real.
\item Eigenfunctions that correspond to different eigenvalues are orthogonal.
\item All eigenvalues are simple.  
\end{itemize}

{$\blacksquare$ \it The q-Bessel operator}. Let  $\alpha>-1$ and $1\leq p < \infty$. Then the space $L_{\alpha,p,q}$ denotes the set of functions on $\mathbb{R}^+_q=\{q^z: z\in\mathbb{Z}\}$
such that
\begin{eqnarray*}
\|f\|_{L_{\alpha,p,q}}=\left(\int\limits_0^\infty\left|f(x)\right|^px^{2\alpha+1}d_qx\right)^\frac{1}{p}<\infty. 
\end{eqnarray*}

The set $H=L_{\alpha,p,q}$ is an Hilbert space with the inner product
\begin{eqnarray*} 
\langle f,g\rangle_{L_{\alpha,p,q}}= \int\limits_0^\infty  f(x)g(x)x^{2\alpha+1}d_qx.
\end{eqnarray*}

Moreover, we introduce the space $C^k_{q,0}$ for $k\in\mathbb{N}$:
\begin{eqnarray*} 
C^k_{q,0}\left(\mathbb{R}^+_q\right)=\{f:\mathbb{R}^+_q\rightarrow\mathbb{R};    \sup\limits_{x\in\mathbb{R}^+_q}\left|D^k_qf(x)\right|<\infty \; \text{and} \;  D^k_qf(0)=D^k_qf(\infty) =0\}.
\end{eqnarray*}  
The $q$-Bessel operator is defined as follows (see \cite{LAJ2006} and \cite{HR1994}):
\begin{eqnarray}\label{additive5.2}
\Delta_{q,\alpha}f(x)&=&\frac{1}{x^{2\alpha+1}}D_q\left[x^{2\alpha+1}D_qf\right](q^{-1}x)\nonumber\\
&=&q^{2\alpha+1}\Delta_qf(x)+\frac{1-q^{2\alpha+1}}
{(1-q)q^{-1}x}D_qf(q^{-1}x),
\end{eqnarray}
where
\begin{eqnarray*}
\Delta_qf(x)=D_q^2f(q^{-1}x). 
\end{eqnarray*}

For $f,g\in C^2_{q,0}\left(\mathbb{R}^+_q\right)$, using formulas (\ref{additive5.2}) and  (\ref{additive2.2}) of $q$-integration by parts,   we obtain
\begin{eqnarray*} 
\langle \Delta_{q,\alpha}f,g\rangle_{L_{q,2,\alpha}}&=& \int\limits_0^\infty  D_q\left[x^{2\alpha+1}D_qf\right](q^{-1}x)g(x) d_qx \\
&=&\left[x^{2\alpha+1}D_qf(x)g(x)\right]_0^\infty-\int\limits_0^\infty   \left[x^{2\alpha+1}D_qf\right](x)D_qg(x)d_qx\\
&=&-\int\limits_0^\infty   D_qf(x)\left[x^{2\alpha+1}D_qg\right](x)d_qx\\
&=&-\left[D_qf(x)\left[x^{2\alpha+1}D_qg\right](x)\right]_0^\infty+\int\limits_0^\infty  f(qx)D_q\left[x^{2\alpha+1}D_qg\right](x)d_qx\\
&=&\int\limits_0^\infty  f(x)D_q\left[x^{2\alpha+1}D_qg\right](q^{-1}x)d_qx\\
&=&\langle f,\Delta_{q,\alpha}g\rangle_{L_{q,2,\alpha}}. 
\end{eqnarray*}

It follows from \cite[Proposition 1]{FHB2002},  that the function 
\begin{eqnarray*}
j_\alpha(\lambda x; q^2)=\Gamma_{q^2}(\alpha+1)\sum\limits_{k=0}^\infty\frac{(-1)^kq^{k(k-1)}}{\Gamma_{q^2}(\alpha+k+1)\Gamma_{q^2}(k+1)}\left(\frac{x}{1+q}\right)^k, 
\end{eqnarray*}
with the eigenvalue $\lambda\in\mathbb{C}$  for the eigenfunction $x\mapsto j_\alpha(\lambda x; q^2)$, where $j_\alpha(\cdot, q^2)$ is called the normalized $q$-Bessel function defined by \cite{LAJ2006} and \cite{HR1994}.

{$\blacksquare$ \it The $q$-deformed
Hamiltonian}. The field of $q$-deformed oscillator algebras and quantum orthogonal polynomials continues to be at the core of intense activities in physics and mathematics. In 2015, W.~S.~Chung and M.~N.~ Hounkonnou and A.~Sama (\cite{WMA2015}) constructed the $q$-deformed Hamiltonian. For the clarity of our exposition, let us briefly discuss in this section main relevant results on $q$-Hermite functions. Let $\mathcal{H}_F$ be the Hilbert space spanned by the basis vectors $\{\psi^q_n(x), n = 1, 2,\cdots\}$ such that
\begin{eqnarray*}
\psi^q_n(x)=\frac{H_n^q(x)}{\sqrt{[2]^2_q[n]_q!}}=\frac{1}{\sqrt{[2]_q^n[n]_q!}}\sum\limits_{k=0}^{\langle\frac{n}{2}\rangle}\times\frac{(-1)^kq^{k(k-1)[n]_q!}}{[n-2k]_q![k]_{q^2}!}\left([2]_qx\right)^{n-2k}, \;\;\;n=0,1,2,\cdots,
\end{eqnarray*}
where the $q$-analogue of the binomial coefficients $[n]_{q}!$ are defined by
\begin{equation*}
[n]_{q}!:=\left\{
\begin{array}{l}
{1,\mathrm{\;\;\;\;\;\;\;\;\;\;\;\;\;\;\;\;\;\;\;\;\;\;\;\;\;\;\;\;\;\;\;\;%
\;if\;{\it n}}=\mathrm{0,}} \\
{[1]_{q}\times [2]_{q}\times \cdots \times [n]_{q},\mathrm{%
\;if\;{\it n}}\in \mathrm{ \mathbb{N},\;\;}}%
\end{array} \right.
\end{equation*}

Moreover, we can write 
\begin{eqnarray*}
D_q\psi^q_n(x)=\sqrt{[2]_q[n]_q}\psi^q_{n-1}(x). 
\end{eqnarray*}

The vectors $|n\rangle$ are eigen-vectors of the $q$-deformed Hamiltonian (see \cite[Proposition 1]{WMA2015})
\begin{eqnarray*}
H_q=\frac{1}{[2]_q}\left(AA^\dagger+A^\dagger A\right)
\end{eqnarray*}
with respect to the eigenvalues
\begin{eqnarray*}
E_n^q=\frac{1}{[2]_q}\left([n]_q+[n+1]_q\right),
\end{eqnarray*}
where the annihilation (lowering) $A$ and
creation (raising) $A^\dagger$ operators are 
\begin{eqnarray*}
A=\frac{1}{\sqrt{[2]_q}}D_q, \;\;\; A^\dagger=\sqrt{[2]_q}x-\frac{q^{N-1}}{[2]_q}D_q.
\end{eqnarray*}

In the limit when $q\rightarrow1$, one recovers the uncertainty
relation for the non deformed harmonic oscillator (or the classical case).

{$\blacksquare$ \it Fractional Sturm-Liouville operator}.

Let $1/2<\alpha\leq1$ and $[a, b]\subset\mathbb{R}$. Then we consider   a fractional problem with boundary conditions in the form (see \cite{RTV2013}): 
\begin{equation}\label{additive5.3}
\mathcal{L}_a\alpha(y)=\left\{ \begin{array}{cl}
-D^\alpha_{b-}\left(\rho(x)D^\alpha_{a+}y(x)\right)(x)+\mu(x)y(x)=\lambda \tau(x)y(x),\\
 a_{11}y(a)+a_{12}I_{b-}^{1-\alpha}\left(\rho D_{a+}^\alpha y\right)|_{x=a}=0,\\
a_{21}y(b)+a_{22}I_{b-}^{1-\alpha}\left(\rho D_{a+}^\alpha y\right)|_{x=b}=0,
\end{array}\right.
\end{equation}
where $D^\alpha_{b-}$ is the right-sided and $D^\alpha_{a+}$ is left-sided Riemann-Liouville fractional derivatives, $I_{b-}^{1-\alpha}$ is the right-sided Riemann-Liouville fractional integral, (\ref{additive5.3}) is a self-adjoint operator in $H=L^2[a,b]$, the
constants in the boundary conditions verify $a_{11}^2+a_{12}^2 \neq0$,  $a_{21}^2+a_{22}^2 \neq0$ and $\rho$, $\mu$ and $\tau$ are continuous functions, such
that $\rho(x) > 0$ and $\tau(x) > 0$ for all  $a\leq{x}\leq{b}$. The function $\tau$ is called the “weight” or “density” function and the
values of $\lambda$  are called eigenvalues of the frational  boundary value problem.

{$\blacksquare$ \it The restricted fractional Laplacian}.

In \cite{CS2017}, L.~A.~Caffarelli and Y.~Sire introduced a fractional Laplacian operator by using the
integral representation in terms of hypersingular kernels in the following form: 
\begin{eqnarray}\label{additive5.4}
\left(-\Delta_{\mathbb{R}^n}\right)^\alpha=C_{d,\alpha}P.V.\int\limits_{\mathbb{R}^n}\frac{f(x)-f(t)}{\left|x-t\right|^{n+2\alpha}}dt,\;\;\;0<\alpha<1.
\end{eqnarray}

The operator (\ref{additive5.4}) is a self-adjoint in $L_2\left(\Omega\right)$ with  eigenvalues $\lambda_{\alpha,k}>0$, $k\in\mathbb{N}$. The corresponding set of eigenfunctions $\psi_{\alpha,k}$, $k\in\mathbb{N}$,  in $L_2\left(\Omega\right)$ (bounded domain $\Omega \subset\mathbb{R}^n$).

{$\blacksquare$ \it Differential operator with involution}.

As a next example, we consider the differential operator with involution in
$L^2(0, \pi)$ generated by the expression
\begin{eqnarray}\label{additive5.5}
\ell(u)=u''(x)-\epsilon u''(\pi-x)
\end{eqnarray}
for $0<x<\pi$,  with homogeneous Dirichlet  conditions 
\begin{eqnarray}\label{additive5.6}
u(0)=0, \;\;u(\pi)=0,
\end{eqnarray}
where $|\epsilon|<1$. The nonlocal functional-differential operator (\ref{additive5.5})-(\ref{additive5.6})  is self-adjoint (see, \cite{KST2017}). For $|\epsilon|<1$, the operator (\ref{additive5.5})-(\ref{additive5.6}) has the following eigenvalues:
\begin{eqnarray*}
\lambda_{2k}=4(1+\epsilon)k^2, k\in\mathbb{N}\;\text{and }\;\lambda_{2k+1}=(1-\epsilon)(2k+1)^2, k\in\mathbb{N}\cup\{0\},
\end{eqnarray*}
and corresponding eigenfunctions
\begin{eqnarray*}
u_{2k}&=&\sqrt{\frac{2}{\pi}}\sin2kx,\;\; k\in\mathbb{N}\\
u_{2k+1}&=&\sqrt{\frac{2}{\pi}}\sin(2k+1)x,\;\; k\in\mathbb{N}\cup\{0\}. 
\end{eqnarray*}

{$\blacksquare$ \it Landau Hamiltonian in 2D}.

The next example is one of the simplest and most interesting models of the
quantum mechanics, that is, the Landau Hamiltonian. The Landau Hamiltonian in 2D is given by
\begin{eqnarray}\label{additive5.5}
\mathcal{L}=\frac{1}{2}\left(\left(i\frac{\partial}{\partial{x}}-By\right)^2+\left(i\frac{\partial}{\partial{y}}+By\right)^2\right),
\end{eqnarray}
acting on the Hilbert space $L^2\left(\mathbb{R}\right)$, where $B > 0$ is some constant. The
spectrum of $\mathcal{L}$ consists of infinite number of eigenvalues  with infinite multiplicity of the form (see, \cite{V1928} and \cite{L1930}): 
\begin{eqnarray*}
\lambda_k=(2n+1)B, n=0,1,2,\cdots
\end{eqnarray*}
and the corresponding system of eigenfunctions (see, \cite{LPZ2015} and \cite{AH2013}) is
\begin{eqnarray*}
 \left\{ \begin{array}{rcl}
 e)1_{k,n}(x,y)=\sqrt{\frac{n!}{(n-k)!}}B^{\frac{k+1}{2}}\exp\left(-\frac{B(x^+y^2)}{2}\right)(x+iy)^kL_n^{(k)}\left(B(x^2+y^2)\right) & \mbox{for} & 0\leq k \\
e)1_{k,n}(x,y)=\sqrt{\frac{n!}{(n-k)!}}B^{\frac{k+1}{2}}\exp\left(-\frac{B(x^+y^2)}{2}\right)(x+iy)^kL_n^{(k)}\left(B(x^2+y^2)\right) & \mbox{for} & 0\leq j,
\end{array}\right.
\end{eqnarray*}
where $L_n^{(\alpha)}$
 are the Laguerre polynomials given by
\begin{eqnarray*}
L_n^{(\alpha)}=\sum\limits_{k=0}^\infty(-1)^kC_{n+\alpha}^{m-k}\frac{t^k}{k!},\;\;\alpha>-1.
\end{eqnarray*}

Note that in \cite{RT17b}, \cite{RT18} and  \cite{RT19a} the wave equation for the Landau Hamiltonian with a singular magnetic field was studied.

 \end{document}